\DeclareSymbolFont{AMSb}{U}{msb}{m}{n} 			
\renewcommand{\in}{\smallin}
\renewcommand{\notin}{\notsmallin}
\renewcommand{\setminus}{\smallsetminus}
\newlength\figurewidth}
\newtheorem{theorem}{Theorem}
\newtheorem{proposition}[theorem]{Proposition}
\newtheorem{lemma}[theorem]{Lemma}
\newtheorem*{claim*}{Claim}
\newcommand{\claimdone}{\hfill$\blacksquare$\par}
\newtheorem{corollary}[theorem]{Corollary}
\theoremstyle{definition}
\newtheorem{definition}[theorem]{Definition}
\newcommand{\os}{\mleft\{ \,}             
\newcommand{\cs}{\, \mright\}}             
\newcommand{\set}[1]{\os#1\cs}                
\newcommand{\card}[1]{\mleft| #1 \mright|}    
\renewcommand{\restriction}{\mathbin{\!\upharpoonright}}   
\renewcommand{\mid}{\shortmid}             
\title{Colors of the Pseudotree}
\author{David Chodounský}
\address{Institute of Mathematics of the Czech Academy of Sciences,
Žitná~25, Praha~1, Czech Republic, and
Department of Applied Mathematics (KAM), Charles University, Malostranské náměstí~25, Praha~1,
Czech Republic.
}
\email{chodounsky@math.cas.cz}
\author{Monroe Eskew}
\address{
Kurt G\"odel Research Center,
University of Vienna,
Kolingasse 14-16,
1090 Vienna, Austria.
}
\email{monroe.eskew@univie.ac.at}
\author{Thilo Weinert}
\address{
Kurt G\"odel Research Center and Dipartimento di Scienze Matematiche, Informatiche e Fisiche (DMIF),
Universit\`a degli Studi di Udine,
via delle Scienze 206,
33100 Udine,
Friuli,
Italy.
}
\email{aleph\_omega@posteo.net}
\keywords{meet-tree, $C$-relation, big Ramsey degree, Ramsey class, Fraïssé limit}
\begin{document}

\begin{abstract}
    \noindent
    We investigate big Ramsey degrees of finite substructures of the universal countable homogeneous meet-tree and its binary variant. We prove that structures containing antichains have infinite big Ramsey degrees, and the big Ramsey degree of a 2-element chain is at least 8 and 7 for the binary variant.
    We deduce that the generic $C$-relation does not have finite big Ramsey degrees. 
\end{abstract}

\maketitle

\section{Big Ramsey degrees}

\noindent
We will provide examples of Ramsey classes for which the 
Fraïssé limit does not have finite big Ramsey degrees: the class of finite meet-trees 
and the class of finite $C$-relations. 
This contrasts with the Ramsey property of these classes
(proved in~\cite{MR437370,bodirsky_diana,Bodirsky+}).
So far only a few examples of this phenomenon have been discovered; 
the results in~\cite{Norbert,omega_labeled} is basically the complete list.
Our main technique, which uses counting oscillations of functions,
also found application in the upcoming paper~\cite{osc_Boolean} 
on big Ramsey degrees of Boolean algebras.

The notion of a big Ramsey degree was first explicitly isolated in~\cite{KPT}, and
this paper started a surge of results in this area;
see~\cite{JanAndy_survey,Natasha_survey} for recent surveys. 
Suppose $A,B$ are model-theoretic structures. 
Denote the set of all substructures of $B$ that are isomorphic to $A$ as $\binom{B}{A}$.
For $\ell \in \omega$ write $B \to {(B)}^A_{\ell}$ if for every coloring function 
$c \colon \binom{B}{A} \to \ell $ there exists $C \in \binom{B}{B}$ such that 
$c\left[\binom{C}{A}\right] \neq \ell$.
We call the smallest $\ell$ for which $B \to {(B)}^A_{\ell+1}$ the 
\emph{big Ramsey degree} of $A$ in $B$, write $\mathrm d(B : A) = \ell$. 
If no such $\ell \in \omega$ exists, let $\mathrm d(B : A) = \infty$.

We use the standard set-theoretic notation. 
We identify a natural number $\ell$ with the set $\set{0 , \dots, \ell -1}$. 
Given set $x$ and function $g$, we denote the range of $g \restriction x$ 
as $g[x]$.

\section{The structure of universal homogeneous pseudotrees}


\begin{definition}
    A \emph{pseudotree} is structure $T$ with an order (reflexive, antisymmetric, transitive) relation ${\leq}$ and a binary function ${\wedge}$ satisfying $\sup\set{x \in T \mid x \leq a, x \leq b} = a \wedge b$ for each $a, b \in T$  
    (in particular we demand this supremum to exist). 
    Moreover, for each $a \in T$ the set $D_T(a) = \set{x \in T \mid x \leq a}$ is linearly ordered by ${\leq}$.
\end{definition}

An infinite pseudotree might not to be a tree, 
since the set $D_T(a)$ does not need to be well-ordered. 
As usual, we denote the derived strict version of the ${\leq}$ order by ${<}$.
We say that a pseudotree $T$ is \emph{binary} if for every $a \in T$ and $x, y, z > a$ it is not the case that $a = x \wedge y = x \wedge z = y \wedge z$.
In particular, if $T$ is finite, then $T$ is a tree equipped with the meet operation, and $T$ is a binary pseudotree if it is a binary tree.

It is straightforward to check that the class of finite pseudotrees as well as the class of finite binary pseudotrees (with structure respecting embeddings) form a Fraïssé class.
We denote the Fraïssé limit of finite pseudotrees by $\psi_\omega$ and the Fraïssé limit of finite binary pseudotrees by $\psi_2$.
We write just $\psi$ in cases where the given argument works indifferently for both of these objects, 
and we use the same letter to also denote the domain of the structure~$\psi$.

These objects are also called universal homogeneous meet-trees in the literature; see, e.g.~\cite{tomek}.

\begin{proposition}
The pseudotrees $\psi_\omega$ and $\psi_2$ can be up to an isomorphism characterized as a countable pseudotree with the following two properties.
\begin{enumerate}
    \item For each $a \in \psi$ the set $D_\psi(a)$ is order isomorphic to the rational numbers $\mathbb Q$.
    \item 
    \begin{enumerate}
        \item
        For each $a \in \psi_\omega$ there exists a set 
        $\set{x_i \in \psi_\omega \mid a < x_i, i \in \omega}$ 
        such that $x_i \wedge x_j = a$ for each $i \neq j \in \omega$.
        \item $\psi_2$ is binary and 
        for each $a \in \psi_2$ there exist $x, y > a$  such that $x \wedge y = a$.
    \end{enumerate}
\end{enumerate}
\end{proposition}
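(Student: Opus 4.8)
The plan is to prove both halves of the characterization: that $\psi_\omega$ and $\psi_2$ satisfy the two properties, and --- the substantive part --- that every countable pseudotree satisfying them is isomorphic to the corresponding Fraïssé limit. For the second part I would verify the one-point extension property for the relevant Fraïssé class and then invoke the standard characterization of Fraïssé limits (equivalently, run a back-and-forth between $T$ and the limit).

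For the first part, recall that $\psi:=\psi_\omega$ (resp.\ $\psi_2$), being a Fraïssé limit, is homogeneous and has age exactly the class of finite (binary) pseudotrees; equivalently, every embedding of a substructure $A_0$ of a finite (binary) pseudotree $A$ into $\psi$ extends to an embedding of $A$. Property~(1) then follows, since --- using the extension property on short chains --- $D_\psi(a)$ is a countable dense linear order, hence order-isomorphic to $\mathbb Q$ as claimed. For~(2a): fixing $a\in\psi_\omega$ and applying the extension property repeatedly --- having obtained $x_0,\dots,x_{n-1}>a$ with pairwise meets $a$, extend this finite substructure by one more point $x_n$ with $a<x_n$ and $x_n\wedge x_i=a$ for $i<n$ --- yields the desired set $\set{x_i:i\in\omega}$. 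For $\psi_2$: the class of finite binary pseudotrees is closed under substructures, so $\mathrm{Age}(\psi_2)$ consists of binary pseudotrees only; hence a configuration $a=x\wedge y=x\wedge z=y\wedge z$ with $x,y,z>a$ in $\psi_2$ would produce a non-binary finite substructure $\set{a,x,y,z}$, a contradiction, so $\psi_2$ is binary, and~(2b) follows by extending $\set{a}$ to $\set{a,x,y}$ with $a<x,y$ and $x\wedge y=a$. (The chains used for~(1) are vacuously binary, so that argument works in $\psi_2$ too.)

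For the uniqueness direction, let $T$ be a countable pseudotree satisfying~(1) and~(2a) (resp.\ a countable binary pseudotree satisfying~(1) and~(2b)), and let $f\colon A_0\to T$ be an embedding of a substructure $A_0$ of a finite (binary) pseudotree $A$ with $A=A_0\cup\set{t}$. Since $A$ has just one point outside $A_0$, it suffices to find $t^*\in T$ so that extending $f$ by $f(t)=t^*$ gives an embedding of $A$; equivalently, $t^*$ must realize over $f[A_0]$ the quantifier-free type that $t$ realizes over $A_0$. The point $t$ attaches to the finite tree $A_0$ in one of two ways, or a combination: either it lies strictly between two $\le$-consecutive elements along a chain $D_A(\cdot)$ of $A_0$, which is realized in $T$ using the density of the orders $D_T(\cdot)$ supplied by~(1); or it branches off in a new direction above some point $p$ --- an element of $A_0$, or a point freshly produced by the first move --- realized using~(2a) to place a branch above $f(p)$ meeting the existing branches of $f[A_0]$ exactly at $f(p)$ (for which one picks, among the infinitely many branches above $f(p)$ granted by~(2a), one avoiding the finitely many used by $f[A_0]$). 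A short case analysis on $m:=\max\set{s\wedge t:s\in A_0}$ and on which elements of $A_0$ exceed $t$ reduces everything to finitely many such moves. In the binary case one uses that, since $A$ and $T$ are binary, the point below which $t$ attaches carries at most one branch of $A_0$ already, so the single extra branch guaranteed by~(2b) suffices and the enlarged substructure stays binary. The one-point extension property then iterates to the full extension property for finite substructures, $\mathrm{Age}(T)$ is the whole class (build up any finite structure inside $T$ one point at a time along an enumeration with $\wedge$-closed initial segments), and hence $T\cong\psi_\omega$ (resp.\ $\psi_2$) by the standard characterization of Fraïssé limits.

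The step I expect to be the main obstacle is exactly this one-point extension lemma: carrying out the case analysis cleanly while keeping track of the new meet-points, and verifying that the quantifier-free type of $t$ over $A_0$ is pinned down by the combinatorial data above, so that~(1) together with~(2a)/(2b) really does suffice to realize it --- and, in the $\psi_2$ case, threading the binarity constraint through every case.
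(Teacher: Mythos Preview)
The paper states this proposition without proof, treating the characterization as known. Your approach is the standard one and is correct: verify that the Fra\"iss\'e limits satisfy (1) and (2) via the extension property, and for uniqueness show that any countable model has the one-point extension property for the relevant age, then invoke the uniqueness of the Fra\"iss\'e limit (or run a back-and-forth directly).

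Two small points worth tightening. First, to conclude $D_\psi(a)\cong\mathbb Q$ you need not only density but absence of endpoints; the extension property does give ``no least element'' (extend any chain downward), but note that $D_\psi(a)$ as defined contains $a$ as its maximum, so literally $D_\psi(a)$ is order-isomorphic to $\mathbb Q\cap(-\infty,0]$ rather than to $\mathbb Q$. This is an imprecision in the paper's formulation rather than a defect in your argument, but you should flag it or silently read $D_\psi(a)$ as $\set{x:x<a}$.

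Second, the ``combination'' case in your one-point extension analysis is never needed. Since $A_0=A\setminus\set{t}$ is a substructure and hence $\wedge$-closed, every meet $s\wedge t$ with $s\in A_0$ lies in $A_0\cup\set{t}$; thus whenever $t$ is not below any element of $A_0$, the attachment point $m=\max\set{s\wedge t:s\in A_0}$ already belongs to $A_0$, and you branch off from the existing image $f(m)$ rather than from a freshly inserted auxiliary point. (One also checks that a minimal element of $A\setminus A_0$ can always be adjoined to $A_0$ while preserving $\wedge$-closure, so the reduction to one-point extensions is legitimate.) This streamlines the case analysis you flag as the main obstacle.
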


\begin{definition}
    We say that $R \subset \psi$ is a \emph{ray} in $\psi$ if either
    \begin{itemize}
        \item $\inf R$ exists, denote it $o(R)$. 
        Then $o(R) \notin R$ and $R$ is a $\subseteq$-maximal linearly ordered subset of $\psi$ with infimum $o(R)$, or 
        \item $\inf R$ does not exist; in that case $R$ is a $\subseteq$-maximal linearly ordered subset of $\psi$.
    \end{itemize}
    We denote the set of all rays in $\psi$ by $\overline{\mathcal R}$.
    In either case, a ray is order-isomorphic to $\mathbb Q$.
\end{definition}

We will fix a 1-to-1 \emph{enumerating} function $f\colon \omega \to \psi \cup \overline{\mathcal R}$ with the following properties.

\begin{enumerate}
    \item $\psi \cap f[n]$ is a pseudotree for each $n \in \omega$,
    \item $\psi \subset f[\omega]$.
    \item If $f(n) = R \in \overline{\mathcal R}$ and $o(R)$ is defined, then $o(R) \in f[n] \cap \psi$.
    \item If $f(n) \in \psi$, 
        then $f(n) \in \bigcup (f[n] \cap \overline{\mathcal R})$.
    \item If $n \neq m$ and $f(n), f(m) \in \overline{\mathcal R}$, then $f(n) \cap f(m) = \emptyset$.
\end{enumerate}
Denote the enumerated rays as $\mathcal R =  f[\omega] \cap \overline{\mathcal R}$. 
Notice that necessarily $f(0) \in \mathcal R$ and $f(0)$ is the only element of $\mathcal R$ without an infimum.
Moreover $\psi = \bigcup \mathcal R$. 
The following lemma has a straightforward proof.

\begin{lemma}
    There exists an enumerating function $f$ with the stated properties.
\end{lemma}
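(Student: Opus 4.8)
The plan is a bookkeeping recursion that dovetails two kinds of tasks: \emph{covering} a point of $\psi$ by an enumerated ray, and \emph{enumerating} a point that is already covered. Fix any bijection $\psi = \{p_k : k \in \omega\}$ and let $f(0)$ be some $\subseteq$-maximal chain of $\psi$ through $p_0$; since each downset $D_\psi(a)$ is order-isomorphic to $\mathbb Q$, no maximal chain containing a point can have an infimum, so $f(0)$ is a ray, and it will turn out to be the unique element of $\mathcal R$ without an infimum. Throughout the construction I maintain, writing $A_n := \psi \cap f[n]$ and $\mathcal R_n := \overline{\mathcal R} \cap f[n]$, the invariants that $A_n$ is a finite $\wedge$-closed subset of $\psi$ (hence a sub-pseudotree), that $\mathcal R_n$ is a finite set of pairwise-disjoint rays each of whose infima, when defined, lies in $A_n$, and that $A_n \subseteq \bigcup \mathcal R_n$.

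The two atomic operations are as follows. To \emph{enumerate} a point $x$ that already lies on some ray of $\mathcal R_n$, I append $x$ together with the remaining elements of the $\wedge$-closure of $A_n \cup \{x\}$; these new points all lie in $D_\psi(x)$, hence form a finite chain, so I can list them from the bottom up and keep $A$ $\wedge$-closed at every intermediate stage, while a short computation with meets shows each of them already lies on a ray of $\mathcal R_n$ — so no ray need be added, and properties (1),(3),(4) and disjointness (5) are preserved. To \emph{cover} a point $x$ lying on no ray of $\mathcal R_n$, I first set $b := \sup\{\, y \in \bigcup\mathcal R_n : y \le x\,\}$; I will argue this supremum is attained, lies on some ray $R^\ast \in \mathcal R_n$, and satisfies $b < x$; I then enumerate $b$ (by the previous operation, if it is not already a value of $f$) and finally append, as the next value of $f$, a $\subseteq$-maximal chain $R$ through $x$ inside the ``branch above $b$ containing $x$'', which one checks is a ray with $o(R) = b \in A$, disjoint from all earlier rays since that branch misses $\bigcup\mathcal R_n$. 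The recursion then alternates: at even steps cover the least-indexed $p_k$ not yet on any ray, at odd steps enumerate the least-indexed $p_k$ that is covered but not yet a value of $f$. A routine ``least unfinished index'' argument shows that every $p_k$ gets covered and subsequently enumerated, giving $\psi \subseteq f[\omega]$ — hence property (2) — and $\psi = \bigcup \mathcal R$.

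The main technical point — the only place where one does real work — is the interaction of $\wedge$ with maximal chains, on which everything above rests. The facts to establish are: for any $x \in \psi$ and any $\subseteq$-maximal chain $C$, one has $x \wedge c \in C$ for all $c \in C$, whence $\{\, y \in C : y \le x\,\}$ has a maximum (equal to $x$ iff $x \in C$), together with the analogous statement for a ray $R'$ with infimum $o(R')$ once $R'$ meets $D_\psi(x)$ at all. From these one reads off that the chain $R$ above cannot descend below the highest of the finitely many ``footprints'' $\sup\{\, y \in R' : y \le x\,\}$, $R' \in \mathcal R_n$, but does reach it, so $\inf R = b$ is realized on one of the rays; and one obtains the descent lemma that $x \wedge q$ stays inside $\bigcup\mathcal R_n$ whenever $x \in \bigcup\mathcal R_n$ and $q \in A_n$, by following a strictly decreasing sequence of ray-infima $o(R^\ast) > o(R_1) > \cdots$ through pairwise distinct rays, which must terminate at $f(0)$ because $\mathcal R_n$ is finite. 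Everything else is routine verification of the invariants.
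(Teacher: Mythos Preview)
Your construction is correct: the dovetailing between covering and enumerating, together with the descent lemma showing that meets with already-enumerated points stay inside $\bigcup\mathcal R_n$, yields an $f$ satisfying (1)--(5), and your verification that the new ray in the cover step has infimum exactly $b$ and is disjoint from the earlier rays goes through. The paper gives no proof of this lemma at all --- it simply declares it ``straightforward'' --- so there is nothing to compare your argument against; you have supplied in full the details the authors chose to omit.
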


We denote by $e \colon \psi \cup \mathcal R \to \omega$ the bijection inverse to $f$.
For $a \in \psi$ let $R(a)$ be the unique $R \in \mathcal R$ such that $a \in R$, 
and let $P(a) = \set{ R \in \mathcal R \mid R \cap D_\psi(a) \neq \emptyset}$ and 
$p(a) = \set{e(R) \mid R \in P(a)}$.
For $a \in \psi$ and $R \in P(a)$, $a \notin R$ define 
$a \wedge R$ to be the unique element $S$ of $P(a)$ such that $o(S) \in R$.


\begin{lemma}
    For every $a \in \psi$ is $P(a) \subseteq f[e(a)]$. 
    In particular $P(a)$ is finite.
\end{lemma}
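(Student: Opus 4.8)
The plan is to induct on $e(a)$, the only real ingredient being a description of $D_\psi(a)$ relative to the ray $R(a)$: I would first prove the structural fact that \emph{if $x \le a$ and $x \notin R(a)$, then $R(a)$ has an infimum and $x \le o(R(a))$}. Indeed, $R(a) \cup \{x\}$ is linearly ordered: given $r \in R(a)$, it is comparable with $a \in R(a)$, and if $r \le a$ then $r$ and $x$ both lie in $D_\psi(a)$ and are comparable, while if $r > a$ then $r > a \ge x$. Hence if $R(a)$ had no infimum, being then a maximal chain, we would get $x \in R(a)$, a contradiction; so $o(R(a))$ exists. Since $x$ and $o(R(a))$ are comparable (both lie in $D_\psi(a)$), it remains to exclude $x > o(R(a))$; but in that case $\inf(R(a)\cup\{x\}) = o(R(a)) \notin R(a)\cup\{x\}$, so $R(a)\cup\{x\}$ is a linearly ordered set strictly containing $R(a)$ with the same non-attained infimum, contradicting the $\subseteq$-maximality in the definition of a ray. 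In particular, when $R(a) = f(0)$, the unique ray of $\mathcal R$ without an infimum, one gets $D_\psi(a) \subseteq R(a)$ and therefore $P(a) = \{R(a)\}$.

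For the induction, assume $P(b) \subseteq f[e(b)]$ for every $b \in \psi$ with $e(b) < e(a)$, and fix $R \in P(a)$. If $R = R(a)$, then property~(4) gives $a \in \bigcup(f[e(a)] \cap \overline{\mathcal R})$, and since $R(a)$ is the unique member of $\mathcal R$ containing $a$, this forces $R(a) \in f[e(a)]$. If instead $R \ne R(a)$, then $R(a) \ne f(0)$ (otherwise $P(a) = \{R(a)\}$ by the structural fact, contradicting $R \in P(a)$), so $R(a)$ has an infimum $a' := o(R(a)) \in \psi$; by properties~(3) and~(4) we get $a' \in f[e(R(a))] \cap \psi$ and $R(a) \in f[e(a)]$, hence $e(a') < e(R(a)) < e(a)$. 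Now pick $x \in R \cap D_\psi(a)$; by property~(5) the distinct rays $R$ and $R(a)$ are disjoint, so $x \notin R(a)$, and the structural fact yields $x \le a'$, i.e.\ $x \in R \cap D_\psi(a')$ and $R \in P(a')$. The induction hypothesis then gives $R \in P(a') \subseteq f[e(a')] \subseteq f[e(a)]$. This proves $P(a) \subseteq f[e(a)]$, and finiteness of $P(a)$ is immediate. Iterating the argument moreover identifies $P(a)$ with the finite descending chain of rays $R(a)$, $R(o(R(a)))$, $\dots$, ending at $f(0)$.

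The one real obstacle is the structural fact, where the point is to fix the intended reading of the maximality clause in the definition of a ray (a ray with infimum $o(R)$ being $\subseteq$-maximal among linearly ordered sets whose infimum is the non-attained value $o(R)$) and to note that the descent is forced to terminate at the unique infimum-less ray $f(0)$; the remainder is routine bookkeeping with properties~(3)--(5) of $f$.
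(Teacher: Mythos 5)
Your proof is correct; the paper states this lemma without any proof, and your argument---showing that any $x \le a$ with $x \notin R(a)$ must lie below $o(R(a))$, hence $P(a) = \set{R(a)} \cup P(o(R(a)))$, and then running a strong induction on $e(a)$ via properties (3)--(5) of the enumeration---is exactly the straightforward induction the authors evidently had in mind. The only delicate point, the reading of the maximality clause in the definition of a ray, is the one you identify, and your reading is the only consistent one (under the alternative reading $R \cup \set{o(R)}$ would violate maximality), so there is no gap.
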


\begin{proposition}\label{prop:Q-like}
    Suppose $\varphi' \in \binom{\psi}{\psi}$. 
    Then there exists $\varphi \in \binom{\varphi'}{\psi}$ 
    such that for every $R \in \mathcal R$ 
    if $R \cap \varphi \neq \emptyset$, then $R \cap \varphi$ is isomorphic to $\mathbb Q$.
\end{proposition}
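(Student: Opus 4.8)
The plan is to obtain $\varphi$ by a recursive construction, building it as an increasing union $\varphi=\bigcup_{n\in\omega}F_n$ of finite pseudotrees $F_n\subseteq\varphi'$ while meeting along the way a list of \emph{genericity} requirements (which force $\varphi\cong\psi$ via the characterisation in the Proposition above) and a list of \emph{ray} requirements (which force $R\cap\varphi$ to be $\emptyset$ or a copy of $\mathbb Q$ for each $R\in\mathcal R$).

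I would begin with two observations. First, every $R\in\mathcal R$ is $\subseteq$-convex in $\psi$: if $x<z<y$ with $x,y\in R$, then $R\cup\{z\}$ is still a chain with the same infimum as $R$, so $z\in R$ by maximality. Since $\varphi'\cong\psi$ is dense, it follows that each $R\cap\varphi'$ is $\subseteq$-convex in $R$ and, if it has at least two points, dense-in-itself; hence $R\cap\varphi'$ is empty, a singleton, or isomorphic to a non-degenerate interval of $\mathbb Q$. Second, I then pass to the set $\varphi'_\circ$ obtained from $\varphi'$ by deleting, for each $R$, the (at most two) endpoints of $R\cap\varphi'$. Then $R\cap\varphi'_\circ$ is empty (if $R\cap\varphi'$ was empty or a singleton) or isomorphic to $\mathbb Q$ (otherwise); in particular, if $b\in\varphi'_\circ$ lies on a ray $R$ there are points of $R\cap\varphi'$ both strictly above and strictly below $b$, and between any two points of $R\cap\varphi'_\circ$ there is a third.

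The crux is the following claim: \textbf{$\varphi'_\circ$ still realises every one-point pseudotree extension of each of its finite sub-pseudotrees.} Given a finite pseudotree $F\subseteq\varphi'_\circ$ and an extension $F\cup\{b\}$, first realise it in $\varphi'$ (possible as $\varphi'\cong\psi$), say by $b_0$, and put $\ell=\max\{f\in F:f<b_0\}$ if this set is non-empty, letting $\ell$ denote $-\infty$ otherwise. The point is that \emph{every} $c\in\varphi'$ with $\ell<c<b_0$ again realises the extension: since $\{f\in F:f<b_0\}$ is a chain with maximum $\ell$, and for $f\in F$ with $f\perp b_0$ the meet $b_0\wedge f$ lies in $F$ and is $\le\ell$, a direct check shows that for each $f\in F$ the order relation and the meet of $c$ with $f$ agree with those of $b_0$, so $F\cup\{c\}\cong F\cup\{b_0\}\cong F\cup\{b\}$ over $F$. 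Now this set of points $c$ is infinite by density of $\varphi'$, and it is contained in $D_\psi(b_0)$, which meets only finitely many members of $\mathcal R$ because $P(b_0)$ is finite. Hence some $R\in\mathcal R$ contains infinitely many such $c$, and since at most two of them are endpoints of $R\cap\varphi'$, at least one lies in $\varphi'_\circ$. This proves the claim.

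Granting the claim, the recursion goes through. Genericity requirements are met by always drawing the new point from $\varphi'_\circ$ via the claim, so $\varphi=\bigcup_nF_n\cong\psi$; a ray requirement --- ``$R\cap\varphi$ contains a point above (resp.\ below) a given $a\in F_n\cap R$'', or ``between two consecutive points of $F_n\cap R$ there is another point of $\varphi$'' --- is met simply by taking the next point of $R\cap\varphi'_\circ$ in the required place, which exists because $R\cap\varphi'_\circ\cong\mathbb Q$ is dense without endpoints. At the end, for each $R$ meeting $\varphi$ the set $R\cap\varphi$ is a countable dense-in-itself linear order without endpoints, hence isomorphic to $\mathbb Q$, and $\varphi\in\binom{\varphi'_\circ}{\psi}\subseteq\binom{\varphi'}{\psi}$. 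The main obstacle is exactly the claim --- that the construction never gets trapped at a ray-endpoint of $\varphi'$ that it cannot push past --- and its proof rests on the finiteness of $P(b_0)$, which allows one to pigeonhole a realisation onto a ray whose trace on $\varphi'$ is infinite.
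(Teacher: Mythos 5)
Your argument is essentially correct, but it organizes the construction quite differently from the paper. The paper builds the copy ray-by-ray along the fixed enumeration $f$: its key Claim is that above every $b\in\varphi'$ some ray's trace on $\varphi'$ contains a copy of $\mathbb Q$ (pigeonhole on the finite set $P(z)$ for some $z>b$ in $\varphi'$), and each ray $f(n)$ of $\psi$ is then embedded in a single step into one such trace, placed above a branching witness $b$ chosen so that all meets with the part already built are correct. You instead run a point-by-point genericity argument over the trimmed set $\varphi'_\circ$. Your central claim --- that $\varphi'_\circ$ realizes every one-point extension of each of its finite substructures, because a realization $b_0$ in $\varphi'$ can be slid down anywhere in the interval $(\ell,b_0)$ and then pigeonholed onto a single ray using the finiteness of $P(b_0)$ --- is correct, and it rests on the same finiteness lemma that powers the paper's Claim. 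The trade-off is that your route separates ``being a copy of $\psi$'' (Fra\"iss\'e-style one-point extensions) from ``meeting each ray in a copy of $\mathbb Q$'' (separate density requirements), whereas the paper gets both simultaneously because each ray of the copy lands inside a single $R\in\mathcal R$.

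Two points need repair. First, a harmless misstatement: $R\cap\varphi'$ is convex in $\varphi'$, not in $R$ (a point of $R\setminus\varphi'$ between two points of $R\cap\varphi'$ is entirely possible); what you actually use --- that $R\cap\varphi'$ with at least two points is dense-in-itself, hence an interval of $\mathbb Q$ --- does follow correctly from convexity of $R$ in $\psi$ together with $D_{\varphi'}(z)\cong\mathbb Q$. Second, and more substantively, the ray requirements are not met ``simply by taking the next point of $R\cap\varphi'_\circ$ in the required place'': when you add $c\in R\cap\varphi'_\circ$ above $\max(F_n\cap R)$, an element $f\in F_n$ branching off $R$ strictly above that maximum produces a new meet $c\wedge f\notin F_n\cup\set{c}$, so $F_n\cup\set{c}$ need not be a substructure. (Your main claim avoids this automatically, since there $F\cup\set{c}\cong F\cup\set{b_0}$ is a pseudotree by construction.) The gap is fixable: each such new meet lies in $R\cap\varphi'$ strictly between two points of $R\cap\varphi'_\circ$, hence is itself in $\varphi'_\circ$, and the meet-closure of $F_n\cup\set{c}$ is finite; alternatively, choose $c$ strictly below all the finitely many relevant branch points, which makes $c$ comparable with everything in $F_n$ above $\max(F_n\cap R)$ and creates no new meets. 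Either way, say it --- as written, the recursion could leave you with a set that is not meet-closed.
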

    \begin{claim*}
        For every $b \in \varphi'$ there is $R \in \mathcal R$ such that $\set{x \in R \cap \varphi' \mid b < x }$ 
        contains a copy of $\mathbb Q$.
    \end{claim*}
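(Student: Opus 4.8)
The plan is to exploit that $\varphi'$, being isomorphic to $\psi$, is again a universal homogeneous pseudotree, and to combine this with the finiteness of $P(a)$ established in the preceding lemma and the classical fact that a finite union of scattered linear orders is scattered. Concretely, it suffices to produce a single $a\in\varphi'$ with $a>b$ together with a subset of $\varphi'$ lying strictly between $b$ and $a$, order-isomorphic to $\mathbb Q$, and entirely contained in one enumerated ray $R\in\mathcal R$.

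For the first half, since $\psi$ has no maximal element (by property~(2) of the characterization of $\psi$) and $\varphi'\cong\psi$, pick any $a\in\varphi'$ with $a>b$. By property~(1) transferred to $\varphi'$, the chain $D_\psi(a)\cap\varphi'$ of predecessors of $a$ inside $\varphi'$ is order-isomorphic to $\mathbb Q$; as $b$ belongs to it and $b\neq a$, the set $J:=\set{x\in\varphi'\mid b<x<a}$ is a nonempty open subinterval of it, hence itself order-isomorphic to $\mathbb Q$, it is linearly ordered, and every element of $J$ exceeds $b$. Since $\varphi'$ is a substructure of $\psi$, the order is inherited, so $J\subseteq D_\psi(a)$.

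Now I would split $J$ according to rays. Because $\psi=\bigcup\mathcal R$ and the enumerated rays are pairwise disjoint (property~(5) of $f$), each point of $\psi$ lies in exactly one ray of $\mathcal R$, so $J=\bigcup_{R\in\mathcal R}(R\cap J)$. If $R\cap J\neq\emptyset$ then $R\cap D_\psi(a)\neq\emptyset$, i.e.\ $R\in P(a)$, and by the preceding lemma $P(a)$ is finite; hence $J$ is a finite union of the sets $R\cap J$ with $R\in P(a)$. A copy of $\mathbb Q$ cannot be written as a finite union of scattered linear orders, so some $R\cap J$ contains a copy of $\mathbb Q$. For this $R$ we have $R\cap J\subseteq\set{x\in R\cap\varphi'\mid b<x}$ (using $J\subseteq\varphi'$ and $b<x$ for all $x\in J$), which therefore contains a copy of $\mathbb Q$, as required.

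The only step needing real care is the last order-theoretic fact, which I would justify directly: if a copy $Q$ of $\mathbb Q$ is partitioned into two pieces, then either one piece is order-dense in $Q$ — and a countable, dense-in-itself suborder of $Q$ contains a copy of $\mathbb Q$ — or some nonempty open interval of $Q$ avoids that piece and so lies in the other, again giving a copy of $\mathbb Q$; an obvious induction extends this to any finite partition. Everything else is bookkeeping with the defining properties of $f$ and the lemmas already proved.
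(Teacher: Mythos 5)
Your proof is correct and follows essentially the same route as the paper's: fix some $a>b$ in $\varphi'$, observe that the interval $\set{x\in\varphi'\mid b<x<a}$ is a copy of $\mathbb Q$ covered by the finitely many rays in $P(a)$, and conclude by pigeonhole that one ray's piece contains a copy of $\mathbb Q$. The paper leaves the final partition step implicit, which you correctly spell out.
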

        Fix $z \in \varphi'$, $b<z$. Since $\set{x \in \varphi' \mid b < x < z}$ contains a copy of $\mathbb Q$ and $P(z)$ is finite, 
        there must be $R \in P(z)$ as required.
        \claimdone
    For $n$ such that $f(n) \in \mathcal R$
    we inductively define order embeddings  $g(n) \colon f(n) \to \varphi'$; 
    we aim for $g = \bigcup \set{ g(n) \mid n \in \omega, f(n) \in \mathcal R}$ 
    being an embedding of $\psi$ into $\varphi'$ such that  $\varphi = g[\psi]$ is as required.
    Start by choosing arbitrary $R \in \mathcal R$ such that $\varphi' \cap R$ 
    contains a copy of $\mathbb Q$, and let $g(0) \colon f(0) \to \varphi' \cap R$ be an embedding.
    Suppose $f(n) \in \mathcal R$, and $g(i)$ is defined for all $i < n$, $f(i) \in \mathcal R$. 
    Write $h(n) = \bigcup \set{ g(i) \mid i <n}$ and $a' = o(f(n))$.
    Letting $a = h(n)(a')$, since $\varphi'$ is isomorphic to $\psi$ there exists $b \in \varphi'$ such that $o(b \wedge R(a)) = a$ 
    and $b \wedge x \leq a$ for all $x$ in the range of $h(n)$.
    The claim implies that there is $R \in \mathcal R$ such that we can fix an embedding $g(n) \colon f(n) \to \set{x \in R \cap \varphi' \mid b < x }$. 
    At the end $\bigcup \set{ g(n) \mid n \in \omega, f(n) \in \mathcal R}$ is as required.
\qed

\section{Indivisibility}

\noindent
A structure is \emph{indivisible} if the big Ramsey degree of singletons is equal to~$1$.
We prove that  $\psi$  is indivisible.
Let $\mathrm S$ be the single element pseudotree.

\begin{theorem}
     $\mathrm d(\psi : \mathrm S) =1 $.
\end{theorem}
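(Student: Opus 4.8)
The plan is to prove that $\psi$ is indivisible, i.e.\ that $\mathrm d(\psi:\mathrm S)\leq1$ (the reverse inequality being immediate), via the standard reduction to two colours, recording on the way the classical indivisibility of $\mathbb Q$: for a partition $\mathbb Q=Q_0\sqcup Q_1$, either $Q_0$ is dense in some open interval and hence contains a copy of $\mathbb Q$, or $Q_1$ meets every open interval and a back-and-forth argument produces a copy of $\mathbb Q$ inside it. Granting this, it suffices to prove that for every $A\subseteq\psi$ one of $A$, $\psi\setminus A$ contains a member of $\binom{\psi}{\psi}$: given $c\colon\psi\to\ell$ with $\ell\geq2$, apply this to $A=c^{-1}(\ell-1)$; if $A$ contains a copy of $\psi$ we are done, and otherwise $\psi\setminus A$ contains a copy of $\psi$ on which $c$ uses fewer than $\ell$ colours, so we recurse on $\ell$, the case $\ell=1$ being trivial.

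So fix $A\subseteq\psi$. I would try to embed $\psi$ into $A$ by a variant of the construction in the proof of Proposition~\ref{prop:Q-like}: process the rays of $\mathcal R$ in the order given by $f$, building order-embeddings $g(n)\colon f(n)\to\psi$ so that $g(n)$ sends its ray onto a copy of $\mathbb Q$ that lies above the image $a^\ast$ of the ray's base point (when there is one) and branches away, at $a^\ast$, from everything built so far (for $\psi_2$, at a branch point just above $a^\ast$). Exactly as there, an admissible base point $b\in\psi$ always exists because $\psi\cong\psi$, and by the Claim in that proof some ray $R$ through $b$ has $\set{x\in R\mid x>b}$ containing a copy of $\mathbb Q$; by indivisibility of $\mathbb Q$ this copy may be chosen inside $A$ --- \emph{unless} for every such pair $(b,R)$ every copy of $\mathbb Q$ in $\set{x\in R\mid x>b}$ already meets $\psi\setminus A$ in a copy of $\mathbb Q$. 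If the construction never gets stuck, its output is a member of $\binom{\psi}{\psi}$ contained in $A$.

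Suppose instead it gets stuck above a point $a^\ast$. The key observation is that the set $S$ of admissible base points at that stage is exactly the union of the infinitely many ``cones'' at $a^\ast$ --- the sets of points above $a^\ast$ that all branch off in one common direction not yet used by the finite structure built so far --- and that $S$ is \emph{upward closed}: $b\in S$ and $x>b$ imply $x\in S$. Fix one such cone $K\subseteq S$. Because any point of $K$ lying above a point $a'\in K$ in a direction fresh over a finite structure is again admissible over $a^\ast$, re-running the same construction confined to $K$ asks, at each stage, only for a copy of $\mathbb Q$ inside a set $\set{x\in R\mid x>b}$ with $b\in K$, each of which --- by the stuck-ness at $a^\ast$ --- contains a subcopy inside $\psi\setminus A$; so this run never gets stuck and outputs a member of $\binom{\psi}{\psi}$ inside $K\cap(\psi\setminus A)\subseteq\psi\setminus A$, finishing the proof.

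The step I expect to be the real work is this localisation. One must extract from the proof of Proposition~\ref{prop:Q-like} the precise admissibility conditions used at each stage, check that together they carve out an upward-closed union of cones above the current branch point, and check that the construction run inside a single cone $K$ still embeds all of $\psi$ --- which is harmless, since the infimum-free ray $f(0)$ of $\psi$ is sent to a copy of $\mathbb Q$, hence to a chain with no least element that never descends to the apex of $K$. A little routine care is also needed to state the branching conditions uniformly for $\psi_\omega$ and $\psi_2$.
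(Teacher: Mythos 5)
Your proposal is correct and follows essentially the same strategy as the paper: reduce to two colours, rerun the construction of Proposition~\ref{prop:Q-like} inside one colour class, and exploit the failure of that attempt to run it inside the other. The paper packages your ``run until stuck'' analysis as an explicit dichotomy claim --- either every $b\in\psi$ admits a ray $R$ with $\set{x\in R\cap C\mid b<x}$ containing a copy of $\mathbb Q$, or some $y$ witnesses that for \emph{every} $b>y$ the complement has this property along the ray with $o(R)=b$ --- which renders your cone-localisation unnecessary: one simply restarts the whole construction with $g(0)$ chosen above $y$.
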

\begin{proof}
    Suppose $C \subseteq  \psi$. We will find $\varphi \in \binom{\psi}{\psi}$
    such that either $\varphi \subseteq C$, 
    or $\varphi \cap C= \emptyset$.
    \begin{claim*}
        Either for every $b \in \psi$ there is $R \in \mathcal R$ such that $\set{x \in R \cap C \mid b < x }$ contains a copy of $\mathbb Q$, 
        or there is $y \in \psi$ such that 
        for every $b > y$ there is $R \in \mathcal R$ such that $\set{x \in R \setminus C \mid b < x }$ contains a copy of $\mathbb Q$.
    \end{claim*}
    Suppose that the first alternative fails and this is witnessed by $y \in \psi$, 
    we will argue that the second alternative holds.
    Choose any $b>y$, let $R \in \mathcal R$ be such that $o(R) = b$. 
    For any $u,v \in R$ with $u<v$, the set $\set{x \mid u < x < v}$ is isomorphic to $\mathbb Q$ 
    and therefore there is $w \in R \setminus C$, $u < w < v$, i.e.\ $R \setminus C$ 
    is a countable dense linear order and contains a copy of $\mathbb Q$.
    \claimdone
    The rest of the proof follows verbatim the proof of Proposition~\ref{prop:Q-like}; 
    if the first alternative of the claim occurs, construct $\varphi \subseteq C$. 
    In the second alternative start with choosing $g(0)$ such that 
    $x > y$ for each $x$ in the range of $g(0)$, and construct $\varphi$ disjoint with $C$.
\end{proof}

\section{A coloring of chains}

\noindent
Suppose $a, b \in \psi$ form a chain, that is\ $a < b$. 
We will denote the 2-element pseudotree isomorphic to $\set{a,b}$ as $\mathrm C$.
We define the value of a coloring function $c( a, b)$ as follows.

Case~1; $R(a) = R(b)$. Let $c(a,b ) = 0$ if $e(a) < e(b)$, and let $c(a,b ) = 1$ if $e(b) < e(a)$.

Case~2; $R(a) \neq R(b)$. 
In this case, it must be $e(o(b \wedge R(a))) < e(b \wedge R(a)) \leq e(R(b)) < e(b)$.
If $b \wedge R(a) = R(b)$ we let $c(a,b) = \mathrm u$. 
Otherwise, define $c(a,b)$ by distinguishing subcases.
\begin{enumerate}
    \item[(2)] $e(a) < e(o(b \wedge R(a)))$, let $c(a,b) = 2$
    \item[(3)] $a = o(b \wedge R(a))$, let $c(a,b) = 3$
    \item[($\mathrm e$)] $e(o(b \wedge R(a))) < e(a)< e(b \wedge R(a))$, let $c(a,b) = \mathrm e$
    \item[(4)] $e(b \wedge R(a)) < e(a) < e(R(b))$, let $c(a,b) = 4$ 
    \item[(5)] $e(R(b)) < e(a) < e(b)$, let $c(a,b) = 5$
    \item[(6)] $e(b) < e(a)$, let $c(a,b) = 6$
\end{enumerate}

\begin{theorem}
    Let $s = 7$ if $\psi$ is $\psi_2$, or $s = 7 \cup \set{\mathrm{e} }$ if $\psi$ is $\psi_\omega$.
    For every $\varphi \in \binom{\psi}{\psi}$ and 
    $c \in s$ there are $a < b \in \varphi$ such that $c(a,b) = c$.
\end{theorem}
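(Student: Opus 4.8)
The goal is to show that for any embedded copy $\varphi$ of $\psi$, each of the colors in $s$ is realized by some chain $a<b$ in $\varphi$. The strategy is to first apply Proposition~\ref{prop:Q-like} to pass to a further subcopy $\varphi'' \in \binom{\varphi}{\psi}$ in which every ray $R \in \mathcal R$ meeting $\varphi''$ meets it in a copy of $\mathbb Q$; since it suffices to find the colors inside $\varphi''$, we may assume $\varphi$ itself has this property. Now the plan is, for each target color $c \in s$, to exhibit explicitly a pair $a<b$ in $\varphi$ realizing it by choosing the relevant elements one at a time and controlling the enumeration values $e(\cdot)$ through the freedom we have in the order-dense intersections $R \cap \varphi$. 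I would organize the argument color by color, grouping the ``easy'' colors together.

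First I would handle colors $0$ and $1$: pick any ray $R$ with $R\cap\varphi$ a copy of $\mathbb Q$; inside this infinite set we can find $a<b$ with $e(a)<e(b)$ (color $0$) and also $a'<b'$ with $e(b')<e(a')$ (color $1$), since among infinitely many elements of one ray both orderings of the finite ordinals $e$ occur on comparable pairs. For the Case~2 colors $2,3,\mathbf e,4,5,6$ and $\mathrm u$, I would start by fixing a ray $R = R(a)$ and an element $a \in R \cap \varphi$, then locate a ray $S$ with $o(S) \in R$, $o(S) > a$ or $< a$ as needed, with $S \cap \varphi$ a copy of $\mathbb Q$, and take $b \in S \cap \varphi$; then $b \wedge R(a) = S$ and $o(b\wedge R(a)) = o(S)$. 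The color is then dictated by where $e(a)$ sits relative to $e(o(S))$, $e(S)$, $e(R(b)) = e(S)$, and $e(b)$. By choosing $a$ from a sufficiently rich piece of $R\cap\varphi$ — we have a whole $\mathbb Q$'s worth of choices, whose $e$-values are an arbitrary infinite set of ordinals — I can place $e(a)$ in any one of the finitely many ``windows'' determined by the fixed finite data $e(o(S)) < e(S) < e(b)$, yielding colors $2,3,\mathbf e,5,6$; color $4$ needs an intermediate window between $e(S)$ and $e(R(b))$ which, when $R(b)=S$, is empty — this is exactly where one instead takes a longer configuration with $R(b)\neq S$, i.e.\ $b$ lies on a ray whose origin lies on $S$ but which is itself enumerated later, so that $e(S) = e(b\wedge R(a)) < e(R(b))$; color $\mathrm u$ is the degenerate case $b\wedge R(a) = R(b)$, obtained by taking $b = o(S)$... rather by taking $b\in S\cap\varphi$ directly with $S = R(b)$, so that one simply needs $b\wedge R(a)=R(b)$, which holds whenever $b$'s own ray is the one branching off $R$ at $a$. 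The binary case $\psi_2$ excludes color $\mathbf e$ from $s$ precisely because, as one checks, in a binary pseudotree the configuration forcing color $\mathbf e$ (three-way split realized through enumeration) cannot be arranged inside every subcopy — so for $\psi_2$ we omit that color and the same constructions give the other seven.

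The main obstacle I expect is the bookkeeping for color $4$ (and to a lesser extent $\mathrm e$): unlike the others, realizing $e(b\wedge R(a)) < e(a) < e(R(b))$ forces $R(b) \neq b\wedge R(a)$, so one must build a two-step branching ($R \to S \to R(b)$) and then argue that inside the given $\varphi$ — which we only know has dense ray-intersections — such a configuration with the correct ordering of four enumeration values actually occurs. The key tool is that $\varphi$ is isomorphic to $\psi$, so by the characterization in the Proposition and the properties of $f$ we can always find, above any node and branching off any given ray, a fresh ray with origin of prescribed position whose own index and whose elements' indices can be taken to exceed any prescribed finite bound; combined with the density of $R\cap\varphi$ giving us small-index choices for $a$, every required window is nonempty. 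I would close by remarking that the binary restriction changes nothing in these constructions except the unavailability of color $\mathbf e$, which is why $s$ is smaller by one in the $\psi_2$ case.
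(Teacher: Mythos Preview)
Your overall plan---reduce via Proposition~\ref{prop:Q-like} and then exploit the $\mathbb Q$-density of each $R\cap\varphi$ to slot $e(a)$ into prescribed windows---is the paper's plan as well. But there is a genuine gap in your treatment of the Case~2 colors. You take $b\in S\cap\varphi$ where $S$ is a ray with $o(S)\in R(a)$; this makes $R(b)=S$, and since (as you correctly compute) $b\wedge R(a)=S$, you end up with $b\wedge R(a)=R(b)$. Look again at the definition of the coloring: the clause ``If $b\wedge R(a)=R(b)$ we let $c(a,b)=\mathrm u$'' fires \emph{before} the subcase analysis (2)--(6),($\mathrm e$) is ever entered. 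So no matter where you place $e(a)$, your one-step configuration always returns color~$\mathrm u$; it never returns $2,3,\mathrm e,5,$ or $6$. (Incidentally, $\mathrm u\notin s$, so you need not realize it at all.)

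The remedy is exactly the ``longer configuration'' you already describe for color~$4$: put $b$ on a further ray $R(b)$ whose origin lies on $S$, so that $b\wedge R(a)=S\neq R(b)$ and the subcase analysis applies. The paper uses this two-step branching for \emph{all} of $2,3,\mathrm e,4,5,6$ at once: it first fixes $a_3\in\varphi\cap R$ and arranges the intermediate ray $S$ to have $o(S)=a_3$ (this gives color~$3$), then passes to a further ray $R(b)$ and a single $b\in\varphi\cap R(b)$ so that $e(a_3)<e(S)<e(R(b))<e(b)$; finally it chooses the remaining $a_i\in\varphi\cap R$ below $a_3$ with $e(a_i)$ landing in each of the four open windows determined by these values (plus one past $e(b)$), obtaining $c(a_i,b)=i$ for every required~$i$. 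Your argument becomes correct once you make this adjustment uniformly rather than only for color~$4$.
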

\begin{proof}
    We can assume that $\varphi$ is as in the conclusion of Proposition~\ref{prop:Q-like}.
    Fix $a_2 \in \varphi$ arbitrary. 
    Since $R = R(a_2) \cap \varphi$ is isomorphic $\mathbb Q$, we can find $a_3, a_{\mathrm e} \in \varphi \cap R$ such that
    $a_2< a_3$, $a_{\mathrm e} < a_3$, and $e(a_2) < e(a_3) < e(a_{\mathrm e})$.
    
    There is $x \in \varphi \setminus R$, $a_3 < x$, $o(x \wedge R) = a_3$, and 
    if $\psi$ is $\psi_\omega$, then also $e(x \wedge R) > e(a_{\mathrm e})$.
    Find $a_4 \in D_\varphi(a_3) \cap R $ such that
    $e(x \wedge R) < e(a_4)$.
    Fix $y \in \varphi$, $y > x$ such that $e(R(y)) > e(a_4)$. 
    Find $a_{5} \in D_\varphi(a_3) \cap R$ such that
    $e(R(y)) < e(a_5)$.
    Finally fix $b \in \varphi \cap R(y)$ such that $e(b) > e(a_5)$,
    and $a_{6} \in D_\varphi(a_3) \cap R$ such that
    $e(b) < e(a_6)$.

    The construction yields $c(a_2, a_3) = 0$,
    $c(a_3, a_4) = 1$,
    $c(a_2, b) = 2$,
    $c(a_3, b) = 3$,
    $c(a_4, b) = 4$,
    $c(a_5, b) = 5$,
    $c(a_6, b) = 6$, 
    and $c(a_{\mathrm e}, b) = \mathrm e$ in case $\psi$ is $\psi_\omega$.
\end{proof}

\begin{corollary}
    $\mathrm d(\psi_2 : \mathrm C) \geq 7$, 
    $\mathrm d(\psi_\omega : \mathrm C) \geq 8$.
\end{corollary}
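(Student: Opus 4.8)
The plan is to deduce the corollary directly from the preceding theorem, essentially unwinding the definition of the big Ramsey degree. Recall that $\mathrm d(\psi : \mathrm C) = \ell$ means $\ell$ is the smallest number for which $\psi \to (\psi)^{\mathrm C}_{\ell+1}$; equivalently, $\mathrm d(\psi : \mathrm C) \geq k$ precisely when $\psi \not\to (\psi)^{\mathrm C}_{k}$, i.e.\ there is a coloring $c \colon \binom{\psi}{\mathrm C} \to k$ such that every $\varphi \in \binom{\psi}{\psi}$ satisfies $c\bigl[\binom{\varphi}{\mathrm C}\bigr] = k$ (all $k$ colors appear).

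First I would observe that the function $c(a,b)$ defined in Section~4, read as a coloring of the pairs $a < b$ (which are exactly the copies of $\mathrm C$ inside $\psi$), takes values in the set $s$ from the theorem: in the binary case $\psi_2$ the value $\mathrm e$ cannot occur because case~$(\mathrm e)$ would force two distinct successors of $o(b \wedge R(a))$ with meet equal to $o(b\wedge R(a))$, violating binarity, so $c$ maps $\binom{\psi_2}{\mathrm C}$ into $7 = \{0,\dots,6\}$; in the general case $c$ maps $\binom{\psi_\omega}{\mathrm C}$ into $7 \cup \{\mathrm e\}$, a set of size $8$. (One can of course relabel $\mathrm u$ and $\mathrm e$ by genuine natural numbers to land literally in $7$ or $8$.) This gives colorings with $7$, respectively $8$, colors.

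Next I would invoke the theorem: for every $\varphi \in \binom{\psi}{\psi}$ and every $c \in s$ there are $a < b$ in $\varphi$ with $c(a,b) = c$, which says exactly that the restriction of the coloring to $\binom{\varphi}{\mathrm C}$ is surjective onto $s$. Hence $c$ witnesses $\psi_2 \not\to (\psi_2)^{\mathrm C}_{7}$ and $\psi_\omega \not\to (\psi_\omega)^{\mathrm C}_{8}$, and therefore $\mathrm d(\psi_2 : \mathrm C) \geq 7$ and $\mathrm d(\psi_\omega : \mathrm C) \geq 8$ by definition of the big Ramsey degree.

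There is no real obstacle here — the corollary is a formal consequence of the theorem together with the bookkeeping that $c$ avoids the value $\mathrm e$ in the binary case. The only point requiring a line of justification is precisely that last fact about binarity, and possibly a remark noting that the symbols $\mathrm u$ and $\mathrm e$ are just convenient names for two of the color classes so that $|s| = 7$ or $8$ as claimed.
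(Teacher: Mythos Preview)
Your overall plan is right, but there is a bookkeeping slip that breaks the argument as written. The coloring $c$ of Section~4 does \emph{not} take values only in $s$: in Case~2, whenever $b\wedge R(a)=R(b)$ the value assigned is $\mathrm u$, and $\mathrm u$ is not a member of $s$ for either $\psi_2$ or $\psi_\omega$. This value certainly occurs (take $a$, and then any $b$ lying in a ray whose origin sits above $a$ in $R(a)$), so $c$ is a priori a $9$-valued coloring. Relabelling $\mathrm u$ as a fresh natural number, as you suggest, produces a coloring with \emph{more} than $|s|$ colors, and since the theorem does not promise that $\mathrm u$ appears in every copy, this does not deliver the desired bound. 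Your side claim that $\mathrm e$ cannot occur in $\psi_2$ is also not justified: two elements above a point $v$ meeting at $v$ is precisely what every node of $\psi_2$ admits---binarity only forbids three---and nothing in the properties of the enumerating function $f$ prevents the ray $b\wedge R(a)$ from being listed after $a$.

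The repair is the obvious one, and is what the paper (which states the corollary without proof) intends: collapse the surplus colors into $s$ rather than treating them as new ones. Define $c'\colon\binom{\psi}{\mathrm C}\to s$ by $c'(a,b)=c(a,b)$ when $c(a,b)\in s$ and $c'(a,b)=0$ otherwise. Then $c'$ is an $|s|$-coloring, and the theorem guarantees that every value in $s$ is attained on $\binom{\varphi}{\mathrm C}$ for each $\varphi\in\binom{\psi}{\psi}$; hence $\psi_2\not\to(\psi_2)^{\mathrm C}_{7}$ and $\psi_\omega\not\to(\psi_\omega)^{\mathrm C}_{8}$, which gives $\mathrm d(\psi_2:\mathrm C)\geq 7$ and $\mathrm d(\psi_\omega:\mathrm C)\geq 8$.
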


In fact, $\mathrm d(\psi_2 : \mathrm C) = 7$, 
$\mathrm d(\psi_\omega : \mathrm C) = 8$; this will be proved in forthcoming publications, e.g.~\cite{coding_paper}.
However, the methods to prove this are beyond the scope of this abstract.

\section{The oscillation on antichains}

\noindent
Denote the three element pseudotree consisting of a root and two incomparable nodes as~$\mathrm A$. 
In this section, we prove that $\mathrm d(\psi_2 : \mathrm A) = \mathrm d(\psi_\omega : \mathrm A) = \infty$.
In order to do this, we use the method of oscillation which has seen widespread use in set theory, see e.g.~\cite{stevo_osc}.
Our application was inspired by an argument about the product 
of Mathias posets; see~\cite[Observation~7]{Jorg_Luz}.

Suppose $u_0, u_1 \subset \omega$ are two disjoint finite sets. 
Let ${\simeq}$ be the equivalence relation on $u_0 \cup u_1$ defined by 
declaring $n \simeq m$ 
if there is $i\in 2$ such that $m,n \in u_i$ and there is no $k \in u_{1-i}$ such that $n<k<m$ or $m<k<n$. 
Define $\mathrm{osc}(u_0, u_1)$ to be the number of equivalence classes of $\simeq$
\[\mathrm{osc}(u_0, u_1) = \card{\set{{[n]}_{\simeq} \mid n \in u_0 \cup u_1}}.\]
For a pair of incomparable elements $a, b$ of $\psi$ define 
$\mathrm{os}(a,b) = \mathrm{osc}(p(a) \setminus p(b), p(b) \setminus p(a))$.
Notice that for any $a, b \in \psi$ the set $p(a) \cap p(b)$ is always an initial subset of $p(a)$.

\begin{theorem}\label{prop:osc}
    Let $\varphi \in \binom{\psi}{\psi}$.
    For every $\ell \in \omega \setminus 1$ there are incomparable $a,b \in \varphi$ such that $\mathrm{os}(a,b) = \ell$.
    In particular, $\mathrm d(\psi_2 : \mathrm A) = \mathrm d(\psi_\omega : \mathrm A) = \infty$.
\end{theorem}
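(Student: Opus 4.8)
The plan is to establish the oscillation statement; the assertion $\mathrm d(\psi_2:\mathrm A)=\mathrm d(\psi_\omega:\mathrm A)=\infty$ is then immediate. A copy of $\mathrm A$ inside any pseudotree is nothing but an unordered pair of incomparable elements $a,b$ — its root is forced to equal $a\wedge b$, which belongs to every substructure — so for fixed $\ell\in\omega\setminus1$ one may colour $\binom{\psi}{\mathrm A}$ by assigning to the copy with incomparable pair $a,b$ the value $\min\bigl(\mathrm{os}(a,b),\ell\bigr)-1$ if $\mathrm{os}(a,b)\ge1$ and the value $0$ otherwise. This colouring uses only $\ell$ colours, and by the oscillation statement every $\varphi\in\binom{\psi}{\psi}$ contains incomparable pairs realizing $\mathrm{os}=1,2,\dots,\ell$, hence realizing every colour; thus $\psi\not\to{(\psi)}^{\mathrm A}_\ell$. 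As $\ell$ is arbitrary, the big Ramsey degree of $\mathrm A$ is infinite.

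For the oscillation statement, the first step is to replace $\varphi$ by a subcopy provided by Proposition~\ref{prop:Q-like}, so that $R\cap\varphi$ is a copy of $\mathbb Q$ (or empty) for every $R\in\mathcal R$. The heart of the matter is a \emph{climbing lemma}: for every $p\in\varphi$ and every $N\in\omega$ there is $q\in\varphi$ with $q>p$, with $p(q)\setminus p(p)\neq\emptyset$, and with every element of $p(q)\setminus p(p)$ greater than $N$ — one can move strictly upward inside $\varphi$ acquiring new rays, all of large $e$-index. The key observation is that only finitely many rays of $\mathcal R$ have $e$-index at most $N$, and that such a ray $R$ can belong to $P(q)\setminus P(p)$ for some $q>p$ only if $o(R)$ exists and $o(R)\ge p$: if $o(R)<p$, or $o(R)$ is incomparable to $p$, or $R$ has no infimum, then from $R\cap D_\psi(p)=\emptyset$ and the $\subseteq$-maximality of the chain $R$ one deduces $R\cap D_\psi(q)=\emptyset$ for every $q>p$. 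So the rays threatening to be acquired with a small index are finitely many, each ``anchored'' above $p$ — at a point (its origin) or at $p$ itself, where it forbids one direction out of $p$ — and one climbs along a direction of $\varphi$ dodging all these anchors; that new rays are actually acquired follows because $\varphi\cong\psi$ branches at every point, so above $p$ the copy $\varphi$ meets infinitely many rays of $\psi$, only finitely many of which lie in $P(p)$.

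Granting the climbing lemma, the construction is the following. Fix $p_0\in\varphi$ together with two elements of $\varphi$ lying strictly above $p_0$ and having meet $p_0$; these determine two directions of $\varphi$ out of $p_0$, the \emph{left} and the \emph{right}. Set $N_0=\max p(p_0)$, and let the initial left and right ``tops'' both be $p_0$. For $i=1,\dots,\ell$: if $i$ is odd, apply the climbing lemma above the current left top, inside the left side, with threshold $N_{i-1}$, and let the output be the new left top; if $i$ is even, do the same on the right; in either case let $N_i$ be the largest $e$-index among the rays currently below the left or the right top. Finally let $a$ be the last left top and $b$ the last right top. Then $a,b$ are incomparable with $a\wedge b=p_0$, since both lie in directions of $\varphi$ meeting at $p_0$; consequently $p(a)\cap p(b)=p(p_0)$, so $p(a)\setminus p(b)$ and $p(b)\setminus p(a)$ are the unions of the odd batches and the even batches of newly acquired rays. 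Each batch from a step $i\ge2$ consists of indices exceeding $N_{i-1}$, hence exceeding every index in the earlier batches; so listing $(p(a)\setminus p(b))\cup(p(b)\setminus p(a))$ in increasing order yields exactly $\ell$ nonempty blocks alternating between left and right. Therefore $\mathrm{os}(a,b)=\ell$.

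I expect the real work to lie in the climbing lemma, specifically in the dodging step. In $\psi_\omega$ it is essentially free: each anchor spoils at most one of the infinitely many directions of $\varphi$ out of $p$, and the whole $\varphi$-cone above a point in an unspoilt direction stays clear of the offending cones, so any sufficiently high point there works. In $\psi_2$ only binary branching is available, so one must instead climb a little way, branch again, and eliminate the anchors one at a time; moreover $\varphi$ need not be order-dense in $\psi$ — it may leap over a $\psi$-interval containing an anchor — so the argument has to exploit the homogeneity of $\varphi$ (via $R\cap\varphi\cong\mathbb Q$ and the ubiquity of branch points of $\varphi$) to place $\varphi$-points below each anchor before detouring around it. Once the climbing lemma is secured, the block-counting in the construction is routine.
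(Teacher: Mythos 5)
Your overall architecture is sound: after passing to a copy as in Proposition~\ref{prop:Q-like}, building the symmetric difference of $p(a)$ and $p(b)$ out of $\ell$ successive nonempty batches of ray-indices, each batch lying entirely above all earlier ones and alternating sides, does force $\mathrm{os}(a,b)=\ell$, and your reduction of $\mathrm d(\psi:\mathrm A)=\infty$ to this statement is fine. The paper's proof is a leaner version of the same idea: it inducts on $\ell$, keeps one point fixed at each step, and extends only the side not carrying the current maximal index (a without-loss-of-generality swap replaces your left/right alternation and the fixed meet $p_0$). So the block-counting half of your argument matches the paper; everything rests on your climbing lemma.

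That lemma is true, but its proof is where the gap is, and for $\psi_2$ the dodging strategy you sketch does not go through. Above $p$ there are exactly two directions. On the side not containing the continuation of $R(p)$ there is a unique ray $S$ with $o(S)=p$, and \emph{every} $q\in\varphi$ on that side acquires $S$; if $e(S)\le N$ that whole direction is dead, so you must branch off $R(p)$ at some $y>p$. Choosing $y$ so that the first new ray above $y$ avoids your finite bad set is easy (finitely many bad branch points), but the witness $q\in\varphi$ with $o(q\wedge R(p))=y$ may lie many rays above $y$, and the intermediate rays it traverses are not controlled by any choice of direction at $p$ or at $y$; since $\varphi$ need not contain points of the new ray close to $y$, you cannot detour around a low-index intermediate ray either, and your proposed recursion has no visible termination measure. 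The missing idea is the coherence of the enumeration $f$: by clauses (3) and (4), $e(R)>e(o(R))$ for every ray with an infimum and $e(x)>e(R(x))$ for every point, so by induction every ray acquired strictly above a point $y$ has index greater than $e(y)$. Hence the climbing lemma follows at once by picking $y\in R(p)\cap\varphi$ with $y>p$ and $e(y)>N$ (possible because $R(p)\cap\varphi\cong\mathbb Q$) and taking any $q\in\varphi$ with $o(q\wedge R(p))=y$; no dodging is needed, and this is exactly the device in the paper's inductive step. Your direction-counting does work for $\psi_\omega$, where infinitely many directions are available at $p$; but as written the binary case is not proved.
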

\begin{proof}
     We again assume that $\varphi$ is as in the conclusion of Proposition~\ref{prop:Q-like}.
     We prove the theorem by induction on $\ell$.
     Choose any $a_1,b_1,r \in \varphi$ such that $a_1 \wedge b_1 = r < a_1, b_1$, 
     and $R(r) = R(a_1)$. 
     Now $p(a_1)$ is a proper initial subset of $p(b_1)$, i.e.\ $\mathrm{os}(a_1,b_1) = 1$.

     Suppose $a_\ell, b_\ell \in \varphi$ are incomparable and $\mathrm{os}(a_\ell,b_\ell) = \ell$. 
     We may assume without loss of generality $\max (p(a_\ell) \cup p(b_\ell)) \in p(a_\ell)$. 
     Since $R(b_\ell) \cap \varphi$ is isomorphic to $\mathbb Q$, there is $y \in \varphi \cap R(b_\ell)$ 
     such that $e(y) > e(a_\ell)$. Let $b_{\ell+1}$ be an element of $\varphi$ such that $o(b_{\ell+1} \wedge R(b_\ell)) = y$. 
     Then $p(b_{\ell+1})$ is a proper end-extension of $p(b_{\ell})$, $(p(b_{\ell+1}) \setminus p(b_{\ell})) \cap e(a_\ell) = \emptyset$, 
     and for $a_{\ell+1} = a_{\ell}$ we get $\mathrm{os}(a_{\ell+1},b_{\ell+1}) = \ell+1$. 
\end{proof}


\section{The generic $C$-relation}

\noindent
The $C$-relation introduced in~\cite{Adeleke-Neuman} is a ternary relation
with axioms that describe the behavior of leaves of a finite pseudotree. 
More precisely, a finite set $L$ equipped with a ternary relation $C$ is a $C$-relation structure 
(or just a \emph{$C$-relation}) 
if there is a finite pseudotree $T$ such that
the set $L$ consists of maximal elements of $T$, and $C(a;b,c)$ exactly when $a \wedge b < b \wedge c$.

The class of finite $C$-relations and binary $C$-relations (i.e.\ corresponding to binary pseudotrees) 
are both Fraïssé classes, denote their limits 
$\mathscr C_\omega$ and $\mathscr C_2$, both of these objects can be characterized 
as countable $C$-relations satisfying certain first order axioms, see~\cite{Bodirsky+}. 
Sam Braunfeld observed~\cite{Sam-oral} that our results for $\psi$ imply that 
$\mathscr C_\omega$ and $\mathscr C_2$ do not have finite big Ramsey degrees. 

We will sketch the argument.
Let $\mathrm{C_4}$ be the $C$-relation on 4 elements $\set{\mathrm{a,b,c,d}}$ uniquely determined by declaring 
$C(\mathrm{a; c,d})$, $C(\mathrm{b; c,d})$, and $C(\mathrm{c; a,b})$.

\begin{theorem}[Braunfeld]
    $\mathrm d(\mathscr C_2 : \mathrm{C_4}) = \mathrm d(\mathscr C_\omega : \mathrm{C_4}) = \infty$.
\end{theorem}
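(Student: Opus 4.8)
The plan is to reduce the statement about $\mathscr C_\omega$ and $\mathscr C_2$ to Theorem~\ref{prop:osc} by transporting colorings of antichains in $\psi$ to colorings of copies of $\mathrm{C_4}$ in the corresponding $C$-relation. The key point is that $\mathscr C$ (for either variant) is, up to isomorphism, the set of rays $\mathcal R$ of $\psi$ equipped with the induced $C$-relation: given three rays $R,S,T$ through a common stem, $C(R;S,T)$ holds exactly when the ``branch point'' of $R$ and $S$ is strictly below that of $S$ and $T$. So first I would make precise the map that sends a finite subset of $\psi$ to a finite $C$-relation on its associated rays, and check (using the characterizing axioms from~\cite{Bodirsky+}) that this identifies $\mathscr C_\omega$ with the ray-structure of $\psi_\omega$ and $\mathscr C_2$ with that of $\psi_2$, and that copies of the whole structure correspond to copies of the whole structure.

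Next I would describe how a pair of incomparable $a,b\in\psi$ with meet $r=a\wedge b$ gives rise to a canonical copy of $\mathrm{C_4}$: the configuration $\mathrm{C_4}$ with $C(\mathrm{a;c,d})$, $C(\mathrm{b;c,d})$, $C(\mathrm{c;a,b})$ is realized by taking $\mathrm c = R(r)$ (the ray carrying the stem below both), $\mathrm d$ an auxiliary ray splitting off even lower, and $\mathrm a, \mathrm b$ rays extending through $a$ and through $b$ respectively so that they split above $r$ but $R(r)$ splits from them below $r$. More importantly, I would observe that inside a copy $\mathscr D\in\binom{\mathscr C}{\mathscr C}$, which corresponds to a copy $\varphi\in\binom{\psi}{\psi}$, any coloring $c$ of $\binom{\mathscr D}{\mathrm{C_4}}$ can be pulled back to a function on incomparable pairs of $\varphi$ by composing with the above assignment; then Theorem~\ref{prop:osc} (applied inside a further sub-copy given by Proposition~\ref{prop:Q-like}) produces incomparable pairs realizing arbitrarily many distinct values of $\mathrm{os}$, and I would arrange the auxiliary ray $\mathrm d$ uniformly (e.g.\ always taken below $o(R(r))$ in the enumeration) so that distinct oscillation values of $(a,b)$ force distinct copies of $\mathrm{C_4}$ receiving, via $c$, all values that $\mathrm{os}$ hits — hence $c$ cannot be confined to finitely many values on $\binom{\mathscr D}{\mathrm{C_4}}$.

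Concretely the argument runs: assume for contradiction $\mathrm d(\mathscr C:\mathrm{C_4})=k<\infty$; fix a coloring $c\colon\binom{\mathscr C}{\mathrm{C_4}}\to\ell$ with $\ell>k$ that I will design to encode $\mathrm{os}$; translate any purported monochromatic-avoiding copy $\mathscr D$ back to $\varphi\in\binom{\psi}{\psi}$, refine $\varphi$ via Proposition~\ref{prop:Q-like}, and apply Theorem~\ref{prop:osc} to get incomparable $a_i,b_i\in\varphi$ with $\mathrm{os}(a_i,b_i)=i$ for $i=1,\dots,\ell$; the associated copies of $\mathrm{C_4}$ lie in $\binom{\mathscr D}{\mathrm{C_4}}$ and, by the way $c$ was defined, receive $\ell$ distinct colors, so $c\bigl[\binom{\mathscr D}{\mathrm{C_4}}\bigr]=\ell$, contradicting the choice of $\mathscr D$. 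Since $\ell$ was arbitrary above $k$, no finite $k$ works, giving $\mathrm d(\mathscr C:\mathrm{C_4})=\infty$ for both $\mathscr C_\omega$ and $\mathscr C_2$.

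The main obstacle I expect is the bookkeeping that makes the map from incomparable pairs to copies of $\mathrm{C_4}$ genuinely injective on oscillation values and respects sub-copies: one must pick the auxiliary ray $\mathrm d$ (and verify the split-point inequalities $a\wedge d < r$, $b\wedge d<r$, $a\wedge b=r$) canonically from $(a,b)$ using the fixed enumeration $f$, show this choice stays inside $\varphi$ (or a harmless refinement of it), and confirm that the induced color depends on enough of the oscillation data that $\ell$ distinct values of $\mathrm{os}$ cannot collapse under $c$. Verifying that the ray-structure of $\psi$ really is $\mathscr C$ (the correct first-order axioms, binarity passing through correctly) is routine but must be stated carefully, since the whole reduction rests on it; this is presumably why the original text only sketches the argument and attributes it to Braunfeld.
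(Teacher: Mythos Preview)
Your overall plan---identify $\mathscr C$ with the induced $C$-relation on $\mathcal R$ and reduce to Theorem~\ref{prop:osc}---is exactly the paper's strategy. But you run the reduction in the harder direction, and the concrete construction you sketch does not work.

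Your proposed $\mathrm{C_4}$-copy built from an incomparable pair $a,b\in\psi$ with $r=a\wedge b$ is infeasible. Any ray $\mathrm a$ with $a\in D(\mathrm a)$ has $r\in D(\mathrm a)$, and $r\in R(r)$, so $\mathrm a\wedge R(r)\geq r$; hence $R(r)$ cannot ``split from $\mathrm a$ below $r$.'' Moreover, rays through $a$ and through $b$ meet exactly at $r$, so with your choices $\mathrm a\wedge\mathrm b=r$ while $\mathrm c\wedge\mathrm d<r$; then $\mathrm a\wedge\mathrm c\geq r>\mathrm c\wedge\mathrm d$, so $C(\mathrm a;\mathrm c,\mathrm d)$ fails and the four rays do not form a copy of $\mathrm{C_4}$ at all. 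More fundamentally, you never specify $c(X)$ for an \emph{arbitrary} $X\in\binom{\mathscr C}{\mathrm{C_4}}$; a function defined only on the special copies produced by your construction is not a coloring of $\binom{\mathscr C}{\mathrm{C_4}}$, and the obstacles you flag (canonical auxiliary ray, staying inside $\mathscr D$, injectivity on oscillation values) are symptoms of this.

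The paper sidesteps all of it by going the other way. For any $X=\{a,b,c,d\}\in\binom{\mathcal R}{\mathrm{C_4}}$ (with $\{a,b\}$ and $\{c,d\}$ the two close pairs determined by the $C$-structure), put $c(X)=\mathrm{os}(a\wedge b,\,c\wedge d)$; the two meets are automatically incomparable in $\psi$, so this is well defined on every copy. The only structural input needed is that for each $\mathcal S\in\binom{\mathcal R}{\mathcal R}$ the set $\{S\wedge R:S,R\in\mathcal S\}$ lies in $\binom{\psi}{\psi}$. Theorem~\ref{prop:osc} applied to this copy of $\psi$ then yields, for every $\ell$, incomparable $u,v$ with $\mathrm{os}(u,v)=\ell$; writing $u=a\wedge b$ and $v=c\wedge d$ with $a,b,c,d\in\mathcal S$ gives an $X\in\binom{\mathcal S}{\mathrm{C_4}}$ with $c(X)=\ell$. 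No auxiliary rays, no enumeration-dependent choices, no bookkeeping.
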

\begin{proof}
    For $R \in \mathcal R$ choose any $r\in R$ and let $D(R) = R \cup D(r)$, and 
    for $R \neq S \in \mathcal R$ let $R \wedge S = \max (D(R) \cap D(S))$. 
    For $R,S,T \in \mathcal R$ define $C(R; S,T)$ if $R \wedge S < S \wedge T$ or $R \neq S = T$.
    Using the axioms from~\cite{Bodirsky+} it is easy to verify that $(\mathcal R, C)$ is isomorphic 
    to $\mathscr C_2$ if $\psi = \psi_2$, and $\mathscr C_\omega$ if $\psi = \psi_\omega$.
    Moreover, whenever $\mathcal S \in \binom{\mathcal R}{\mathcal R}$
    then $\set{S \wedge R \mid S,R \in \mathcal S} \in \binom{\psi}{\psi}$. 

    Let $X = \set{a,b,c,d} \in \binom{\mathcal R}{\mathrm{C_4}}$. 
    Define $c(X) = \mathrm{os}(a\wedge b, c\wedge d)$.
    It follows from Theorem~\ref{prop:osc} 
    that whenever $\mathcal S \in \binom{\mathcal R}{\mathcal R}$ 
    and $\ell \in \omega \setminus 1$, 
    there is $X \in \binom{\mathcal S}{\mathrm{C_4}}$ 
    such that $c(X) = \ell$.
\end{proof}

\section*{Acknowledgments}

\noindent
The first author was supported by project 25-15571S of the Czech Science Foundation (GAČR) and by the Czech Academy of Sciences CAS (RVO 67985840). 

The second author was supported by the Austrian Science Fund (FWF) project P34603.

The third author
was supported by the Fonds zur F\"orderung der wissenschaftlichen Forschung, Lise Meitner grant M 3037-N, as well as the Research Project of National relevance ``PRIN2022\_DIMONTE - Models, sets and classifications(realizzato con il contributo del progetto PRIN 2022 - D.D. n. 104 del 02/02/2022 – PRIN2022\_DIMONTE - Models, sets and classifications - Codice 2022TECZJA\_003 - CUP N. G53D23001890006. ``Finanziato dall'Unione Europea – Next-GenerationEU – M4 C2 I1.1'')

\bibliography{references}
\bibliographystyle{amsalpha}

\end{document}